\theoremstyle{plain}
\newtheorem{theorem}{Theorem}[section]
\newtheorem{corollary}[theorem]{Corollary}
\newtheorem{proposition}[theorem]{Proposition}
\theoremstyle{definition}
\newtheorem{definition-theorem}[theorem]{Definition-Theorem}
\newtheorem{definition-remark}[theorem]{Definition-Remark}
\newtheorem{remark}[theorem]{Remark}
\theoremstyle{remark}
\newcommand{\myparagraph}[1]{}
\title{On The Motivic Leray--Hirsch Theorem For Pure Tate Fibre Bundles}
\author{Esmail Arasteh Rad and Somayeh Habibi}
\date{\today}
\begin{document}

\maketitle

\begin{abstract}
    In this note we prove a motivic version of Leray--Hirsch theorem for pure Tate fibre bundles in the Grothendieck category of Chow motives. We then discuss some of its applications.
\end{abstract}

\section*{Notation and Preliminaries}

Throughout, all schemes are smooth projective over a perfect field \(k\) unless otherwise is stated.

For a scheme $X$ of finite type over $k$, the \emph{Chow group} $\mathrm{CH}_p(X)$ (resp. $\mathrm{CH}^p(X)$) is the group of algebraic cycles of dimension $p$ (resp. codimension $p$) on $X$ modulo rational equivalence. We denote by \(\mathrm{CHM}(k)\) the category of Grothendieck Chow motives, which is defined as follows: A Chow motive over $k$ is a triple $(X,p,m)$, where $X$ is a smooth projective scheme over $k$, $p$ is a correspondence in $Corr^0(X,X)$ that satisfies $p\circ p=p$, and $m$ is an integer in  $\mathbb{Z}$. Given a second object $(Y,q,n)$, one defines \(
\text{Hom}((X,p,m), (Y,q,n))=q\circ Corr^{n-m}(X, Y) \circ p \subset Corr^{n-m}(X, Y)
\). Here $Corr^r(X,Y)= \mathrm{CH}^{d+r}(X \times Y)$, for connected $X$ of dimension $d$. For a smooth projective variety \(X\) we write \(M_{\mathrm{CH}}(X)\) for the corresponding Chow motive. 
Set
\(\mathbf{1}(n) := (\operatorname{Spec} k,\, \mathrm{id},\, n)\). 
The \emph{category of pure Tate motives} is the full additive tensor subcategory of the category of $\mathrm{CHM}(k)$ whose objects are finite direct sums of Tate motives $\mathbf{1}(n)$. Note that in \(\mathrm{CHM}(k)\) we have \(M_{\mathrm{CH}}(\mathbb{P}^1)=\mathbf{1} \oplus \mathbf{1}(-1)\). We write $\mathbb L$ for the Lefschetz-Tate motive $\mathbf{1}(-1)$.

 We write \(\mathrm{CH}^{p}(X,n)\) for Bloch’s higher Chow groups, defined as the homology of the codimension $p$ cycle complex \(z^{p}(X,*)\) on \(X\times\Delta^{*}\), where \(\Delta^* := \{\Delta^n :=\operatorname{Spec} k[t_0,\dots,t_n]/(t_0+\cdots+t_n-1) \}_n\) is the cosimplicial algebraic simplex. In particular, one has \(\mathrm{CH}^{p}(X)=\mathrm{CH}^{p}(X,0)\).

To denote Voevodsky's categories $\textbf{DM}_{gm}^{eff}(k)$, $\textbf{DM}_{gm}(k)$, etc. of motives over $k$, and the functors $M(-):\textbf{Sch}_k\rightarrow \textbf{DM}_{gm}^{eff}(k)$ and $M^c(-):\textbf{Sch}_k\rightarrow \textbf{DM}_{gm}^{eff}(k)$, we use the same notation as in \cite{VSF}.

\begin{remark}\label{RemEmbeddingCHtoDMgm}
Recall that there is a natural symmetric monoidal functor
\[
\mathrm{CHM}(k) \longrightarrow \mathrm{DM}_{\mathrm{gm}}(k,\mathbb{Z}), 
\]
under which the Tate object $\mathbb L$ gets identified with $\mathbb{Z}(1)[\,2\,]$.

\end{remark}

Let $Ab$ be the category of abelian groups. Let us recall that there is a fully faithful tensor triangulated functor
$$
i : D^b_f(Ab) \to \mathrm{DM}_{\mathrm{gm}}(k,\mathbb{Z}),
$$
sending $\mathbb{Z}$ to $\mathbb{Z}(0)$, where $D^b_f(Ab)$ is the full subcategory of the bounded derived category $D^b(Ab)$,
consisting of objects with finitely generated cohomology groups. See \cite{H-K}[Prop. 4.5].

The Proposition below shows that the Chow groups of a pure Tate motive provide sufficient data for its reconstruction (up to isomorphism). This is a modified version of \cite[prop 4.10]{H-K}. 

\begin{proposition}\label{PropCHowDecompositionForPM}
   Let $X$ be a smooth variety whose associated motive $M(X)$ is pure Tate and satisfies Poincar\'e duality. Then the Chow groups $\mathrm{CH}^p(X)$ are free of finite rank, and moreover there is a natural isomorphism $$M(X) \cong \bigoplus_p \mathrm{CH}_p(X) \otimes \mathbb{Z}(p)[2p].$$
\end{proposition}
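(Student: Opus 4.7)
The plan is to combine an abstract pure Tate decomposition of $M(X)$ with a Hom computation supplied by Poincar\'e duality, and then identify the resulting ranks with those of the Chow groups.

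First, since $M(X)$ is pure Tate, I fix a (non-canonical) isomorphism $M(X)\cong\bigoplus_{i\in I}\mathbb{Z}(a_i)[2a_i]$ in $\mathrm{DM}_{\mathrm{gm}}(k,\mathbb{Z})$ for some finite multiset $\{a_i\}_{i\in I}$. The elementary computation
\[
\mathrm{Hom}_{\mathrm{DM}}(\mathbb{Z}(p)[2p],\mathbb{Z}(a)[2a])\;=\;H^{2(a-p),\,a-p}(\mathrm{Spec}\,k,\mathbb{Z})\;=\;\mathrm{CH}^{a-p}(\mathrm{Spec}\,k),
\]
which equals $\mathbb{Z}$ if $p=a$ and $0$ otherwise (using that $\mathrm{CH}^{n}(\mathrm{Spec}\,k)=0$ for $n>0$ and that motivic cohomology vanishes in negative weight), then yields $\mathrm{Hom}_{\mathrm{DM}}(\mathbb{Z}(p)[2p],M(X))\cong \mathbb{Z}^{n_p}$, where $n_p:=\#\{i:a_i=p\}$.

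On the other hand, for smooth projective $X$ of dimension $d$, Poincar\'e duality $M(X)^{\vee}\cong M(X)(-d)[-2d]$ provides the identification
\[
\mathrm{Hom}_{\mathrm{DM}}(\mathbb{Z}(p)[2p],M(X))\;\cong\;\mathrm{Hom}_{\mathrm{DM}}(M(X),\mathbb{Z}(d-p)[2d-2p])\;=\;\mathrm{CH}^{d-p}(X)\;=\;\mathrm{CH}_p(X).
\]
Comparing the two Hom computations gives $\mathrm{CH}_p(X)\cong\mathbb{Z}^{n_p}$, establishing freeness and finite rank.

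For the natural isomorphism I take $\phi:\bigoplus_p \mathrm{CH}_p(X)\otimes\mathbb{Z}(p)[2p]\to M(X)$ to be the direct sum over $p$ of the counit/evaluation maps
\[
\mathrm{Hom}_{\mathrm{DM}}(\mathbb{Z}(p)[2p],M(X))\otimes\mathbb{Z}(p)[2p]\;\longrightarrow\;M(X),
\]
with $\mathrm{CH}_p(X)$ viewed as an object of $\mathrm{DM}_{\mathrm{gm}}(k,\mathbb{Z})$ via the fully faithful embedding $i:D^b_f(Ab)\hookrightarrow\mathrm{DM}_{\mathrm{gm}}(k,\mathbb{Z})$. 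Naturality of $\phi$ in $M(X)$ is automatic from this construction. The main expected obstacle is verifying that $\phi$ is an isomorphism: both source and target are finite direct sums of Tate objects $\mathbb{Z}(a)[2a]$, and the first Hom computation above implies that the functor $T\mapsto\bigl(\mathrm{Hom}(\mathbb{Z}(q)[2q],T)\bigr)_{q\in\mathbb{Z}}$ is fully faithful on the full subcategory of such sums, landing in $\mathbb{Z}$-graded free finite-rank abelian groups. Since $\phi$ induces the identity after applying this functor (by its very construction as evaluation), $\phi$ is an isomorphism.
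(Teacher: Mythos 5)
Your proposal is correct and follows essentially the same route as the paper: it combines the representability of Chow groups by Tate twists with Poincar\'e duality to identify the graded pieces, and then verifies the resulting comparison map is an isomorphism by a Yoneda-type (full-faithfulness on pure Tate objects) argument. You have simply written out in detail the steps the paper leaves terse, the only cosmetic difference being that your map runs $\bigoplus_p \mathrm{CH}_p(X)\otimes\mathbb{Z}(p)[2p]\to M(X)$ rather than in the opposite direction.
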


   \begin{proof}
       
 From the natural isomorphism $\mathrm{CH}^p(X) \cong Hom(M(X),\mathbb{Z}(p)[2p])$, see \cite[Thm. 19.1 and Prop. 14.16]{Voe} or \cite[Cor. 4.2.5]{VSF},   the facts that $\mathrm{CH}^p(X)$ is free and of finite rank follows, and moreover, this together with Poincar\'e duality give the map $M(X) \to\bigoplus_p\mathrm{CH}_p(X) \otimes \mathbb{Z}(p)[2p]$, which one can easily see that is an isomorphism, using Yoneda type argument.

   \end{proof}

\bigskip

\section{Classical Leray--Hirsch Theorem and Leray--Serre Spectral Sequence}

Let $\pi\colon E\to B$ be a fibre bundle with fibre $F$ of CW complexes. Fix a  coefficient ring $R$. Suppose there exist classes
\[
c_1,\dots,c_r\in \mathrm H^*(E,R)
\]
such that for each $b\in B$ the restrictions
\[
c_i\big|_{F_b}\in \mathrm H^*(F_b,R)
\]
form an $R$--module basis of $\mathrm H^*(F_b,R)$. 

\bigskip

\noindent
Recall that for the continuous map $\pi\colon E\to B$ the Leray-Serre spectral sequence takes the form
\[
\mathrm E_2^{p,q}=\mathrm H^p\big(B,\mathcal{H}^q(F,R)\big)\ \Rightarrow\ \mathrm{H}^{p+q}(E,R),
\]

where $\mathcal{H}^q(F,R)$ denotes the local system on $B$ whose stalk at $b$ is $\mathrm H^q(F_b,R)$. The differential
    \[
    d_r^{p,q}:\mathrm E_r^{p,q}\longrightarrow \mathrm E_r^{p+r,\;q-r+1}
    \]
    measures how fibre cohomology twists over the base. The convergence gives \(\bigoplus_{p+q=n} \mathrm E_\infty^{p,q}\cong \mathrm H^n(E,R)\) (as filtered modules).

Note that the monodromy action of $\pi_1(B,b)$ on $\mathrm H^*(F_b,R)$ is trivial because the $c_i$ are global and restrict to a basis on every fibre. Namely, global classes “rigidify” the fiberwise cohomology, preventing it from twisting under monodromy. Hence $\mathcal{H}^q(F,R)$ is the trivial local system. Therefore the $E_2$--page simplifies to
\[
\mathrm E_2^{p,q}\cong \mathrm H^p(B,R)\otimes \mathrm H^q(F,R),
\]
\myparagraph{so the base and fibre cohomology interact as a tensor product}and we get an $R$--module isomorphism
\[
\mathrm H^*(B,R)\otimes_R \mathrm H^*(F,R)\cong \mathrm H^*(E,R),
\]
Moreover, it can be shown easily that the morphism is given explicitly by \(b\otimes c_i \longmapsto \pi^*(b)\smile c_i.\)

\bigskip

\section{Leray--Hirsch Theorem for Pure Tate fibre bundles}

Let $\pi: X \to Y$ be a smooth proper fibration of algebraic varieties over $k$, locally trivial for Zariski topology, with pure Tate fibre $F$ of dimension $d$, that satisfies Poincar\'e duality. Moreover, assume that the restriction map from $\mathrm{CH}^*(X)$ to $\mathrm{CH}^*(X_y)$ is surjective for a point (and therefore for all points) $y$ in $Y$. Let $\{\zeta_i\}_{i=1\dots r}$ be a set of homogeneous cycles in $\mathrm{CH}_*(X)$, whose restrictions to each fibre $X_y$ form a \emph{basis of $\mathrm{CH}_*(X_y)$ as a free $\mathbb{Z}$-module}. Below we slightly generalize the statement of the Leray--Hirsch theorem for Chow groups given in \cite[Appendix C.]{Ful} and modify the proof accordingly.

\begin{theorem}\label{ThmLetaHirschForChow}
Keep the above assumption, then:
\[
\phi: \mathrm{CH}_*(Y) \otimes_\mathbb{Z} \mathrm{CH}_*(F) \cong \bigoplus_{i=1}^r \mathrm{CH}_*(Y) \cdot \zeta_i \tilde{\rightarrow}\mathrm{CH}_*(X)
\]
as free $\mathbb{Z}$-modules, where the element $(\alpha_i \cdot \zeta_i)_i$ maps to $\sum_i^r \zeta_i \cap\pi^*(\alpha_i)$ by the above isomorphism.

\end{theorem}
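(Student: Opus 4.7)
The plan is to adapt Fulton's proof of the projective bundle formula \cite[App.~C]{Ful}: Noetherian induction on $\dim Y$ reduces surjectivity of $\phi$ to the case of a trivial bundle (where a K\"unneth formula applies), and injectivity is handled separately by constructing an explicit left inverse from a fibrewise Poincar\'e dual basis.

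\textbf{Trivial case.} I first treat $X = Y \times F$. By Proposition~\ref{PropCHowDecompositionForPM}, $M(F) \cong \bigoplus_{p}\mathrm{CH}_{p}(F)\otimes\mathbb{Z}(p)[2p]$ in $\mathrm{DM}_{\mathrm{gm}}(k,\mathbb{Z})$, so tensoring with $M(Y)$ and applying the representability $\mathrm{CH}^{q}(-)=\mathrm{Hom}(M(-),\mathbb{Z}(q)[2q])$ yields the K\"unneth isomorphism
\[
\mathrm{CH}_{*}(Y)\otimes_{\mathbb{Z}}\mathrm{CH}_{*}(F)\ \widetilde{\longrightarrow}\ \mathrm{CH}_{*}(Y\times F),\qquad \alpha\otimes\beta\mapsto p_{1}^{*}\alpha\cap p_{2}^{*}\beta.
\]
Since the $\{\zeta_{i}\}$ are related to any pullback basis of $\mathrm{CH}_{*}(F)$ by a fibrewise change of basis with entries in $\mathbb{Z}$ (forced by degree reasons to be locally constant on $Y$), it follows that $\bigoplus_{i}\mathrm{CH}_{*}(Y)\cdot\zeta_{i}\ \widetilde{\longrightarrow}\ \mathrm{CH}_{*}(Y\times F)$ as well.

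\textbf{Surjectivity via localisation.} In general I argue by Noetherian induction on $\dim Y$. Choose a non-empty open $U\subseteq Y$ over which $\pi$ trivialises, let $Z:=Y\setminus U$ with the reduced structure, $X_{U}:=\pi^{-1}(U)$, $X_{Z}:=\pi^{-1}(Z)$. Tensoring the Chow localisation sequence for $Y$ with the free abelian group on $\{\zeta_{i}\}$ gives the commutative diagram
\[
\begin{CD}
\bigoplus_{i}\mathrm{CH}_{*}(Z)\cdot\zeta_{i}|_{X_{Z}} @>>> \bigoplus_{i}\mathrm{CH}_{*}(Y)\cdot\zeta_{i} @>>> \bigoplus_{i}\mathrm{CH}_{*}(U)\cdot\zeta_{i}|_{X_{U}} @>>> 0\\
@VV{\phi_{Z}}V @VV{\phi_{Y}}V @VV{\phi_{U}}V @.\\
\mathrm{CH}_{*}(X_{Z}) @>>> \mathrm{CH}_{*}(X) @>>> \mathrm{CH}_{*}(X_{U}) @>>> 0,
\end{CD}
\]
whose bottom row is the Chow localisation sequence for $X$. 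The hypotheses on $\pi$ are preserved under base change to $Z$, so $\phi_{Z}$ is surjective by induction; $\phi_{U}$ is an isomorphism by the trivial case; and a four-lemma chase yields surjectivity of $\phi_{Y}$.

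\textbf{Injectivity --- the main obstacle.} The hard step is injectivity, since the Chow localisation sequence is only right exact and precludes a symmetric five-lemma argument. My plan is to produce an explicit left inverse of $\phi_{Y}$. Using the surjectivity of $\mathrm{CH}^{*}(X)\to\mathrm{CH}^{*}(X_{y})$ together with Poincar\'e duality on the fibre, lift a fibrewise Poincar\'e dual basis of $\{\zeta_{i}|_{X_{y}}\}$ to global classes $\zeta_{j}^{*}\in\mathrm{CH}^{*}(X)$, and set
\[
\psi:\mathrm{CH}_{*}(X)\longrightarrow\bigoplus_{j}\mathrm{CH}_{*}(Y),\qquad \xi\longmapsto\bigl(\pi_{*}(\zeta_{j}^{*}\cap\xi)\bigr)_{j}.
\]
By the projection formula, $\psi\circ\phi_{Y}((\alpha_{i})_{i})_{j}=\sum_{i}\pi_{*}(\zeta_{j}^{*}\cap\zeta_{i})\cap\alpha_{i}$, so $\psi\circ\phi_{Y}=\mathrm{id}$ reduces to the identity $\pi_{*}(\zeta_{j}^{*}\cap\zeta_{i})=\delta_{ij}\,[Y]$ in the appropriate graded component of $\mathrm{CH}_{*}(Y)$. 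On the trivialising open $U$ this identity is immediate from the K\"unneth isomorphism and fibrewise Poincar\'e duality, so the residue $\pi_{*}(\zeta_{j}^{*}\cap\zeta_{i})-\delta_{ij}[Y]$ is supported on $Z$ and vanishes by the same Noetherian induction used above, yielding injectivity and completing the proof.
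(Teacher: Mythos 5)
Your surjectivity argument is sound and is essentially the paper's: both are Fulton-style Noetherian inductions on $\dim Y$ using the right-exact localisation sequence (the paper routes the comparison through the generic fibre $X_K$ and a four/five-lemma chase on kernels, you route it through a trivialising open $U$; these are equivalent). The genuine problem is in your injectivity step.

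The identity $\pi_*(\zeta_j^*\cap\zeta_i)=\delta_{ij}[Y]$ that your left inverse $\psi$ relies on is \emph{false} in general, and it already fails over the trivialising open $U$, not just on the complement $Z$. Write $\zeta_i\in\mathrm{CH}^{j_i}(X)$ and $\zeta_j^*\in\mathrm{CH}^{d-j_j}(X)$; then $\pi_*(\zeta_j^*\cap\zeta_i)\in\mathrm{CH}^{\,j_i-j_j}(Y)$. When $j_i<j_j$ this vanishes for degree reasons, and when $j_i=j_j$ it equals $\delta_{ij}[Y]$ by the fibrewise duality; but when $j_i>j_j$ it is an uncontrolled class in $\mathrm{CH}^{\,j_i-j_j}(Y)$ with $j_i-j_j>0$. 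Concretely, on $U\times F$ one has $\zeta_i|_{X_U}=\sum_k a_{ik}\times\beta_k$ where only the $\mathrm{CH}^0(U)$-components of the $a_{ik}$ are pinned down by the fibrewise basis condition; the positive-codimension cross terms $a_{ik}\cap b_{jl}$ with $\deg(\beta_k\cap\beta_l^\vee)\neq 0$ survive the pushforward. Your proposed repair --- that the ``residue'' is supported on $Z$ and ``vanishes by Noetherian induction'' --- cannot work: a class in the image of $\mathrm{CH}_*(Z)\to\mathrm{CH}_*(Y)$ has no reason to be zero (this is exactly the failure of left-exactness of localisation that you yourself invoked to explain why injectivity is the hard step), and in any case the residue is not supported on $Z$.

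The correct fix, which is what the paper does, is a triangularity argument rather than an exact left inverse: order the $\zeta_i$ by codimension, take $p$ maximal with some coefficient $\alpha_q^p\neq 0$ in a purported relation $\sum_{i,j}\zeta_i^j\cap\pi^*\alpha_i^j=0$, and apply $\pi_*(\vartheta_q^p\cap-)$. All terms with $j<p$ die because $\vartheta_q^p\cap\zeta_i^j$ has codimension $<d$ so its pushforward lands in negative codimension; the terms with $j=p$, $i\neq q$ die by fibrewise duality via the ``general fact''; and the $(q,p)$ term returns $\alpha_q^p$, giving a contradiction. (Equivalently: the matrix $\bigl(\pi_*(\zeta_j^*\cap\zeta_i)\bigr)$ acting on $\bigoplus_i\mathrm{CH}_*(Y)$ is unipotent with respect to the codimension filtration, hence invertible, so $\psi\circ\phi$ is an isomorphism even though it is not the identity.) You should replace your claimed identity and its justification by one of these arguments.
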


\begin{proof}

 Since $F$ is pure Tate, the Chow groups $$\mathrm{CH}^p(F)=\text{Hom}_{\mathrm{CHM}}(M_\mathrm{CH}(F),\mathbb L^{\otimes p})$$ are free of finite rank. Now, we prove by induction on $\dim Y$. We may assume $Y$ is irreducible with function field $K$. Let $X_K \cong F \otimes_k K$ denote the generic fibre and $\rho : \mathrm{CH}^*(X) \to \mathrm{CH}^*(X_K)$ the restriction morphism. Consider the following commutative diagram:
\[
\begin{CD}
0 @>>> \ker \psi @>>> \bigoplus_{i=1}^r \mathrm{CH}^*(Y) @>\psi>> \mathrm{CH}^*(X_K) @>>> 0 \\
@. @VVV @VV\phi V @| @. \\
0 @>>> \ker \rho @>>> \mathrm{CH}^*(X) @>\rho>> \mathrm{CH}^*(X_K) @>>> 0
\end{CD}
\]

Let $\alpha \in \ker \rho$. There exists an open subscheme $U_\alpha \subset Y$ such that the restriction of $\alpha$ to $\mathrm{CH}^*(X_{U_\alpha})$ vanishes. Set $Z_\alpha := Y \setminus U_\alpha$. Consider the diagram:
\[
\begin{CD}
\bigoplus_i \mathrm{CH}^*(Z_\alpha) @>>> \bigoplus_i \mathrm{CH}^*(Y) @>>> \bigoplus_i \mathrm{CH}^*(U_\alpha) @>>> 0 \\
@VVV @VVV @VVV @. \\
\mathrm{CH}^*(\pi^{-1}(Z_\alpha)) @>>> \mathrm{CH}^*(X) @>>> \mathrm{CH}^*(\pi^{-1}(U_\alpha)) @>>> 0
\end{CD}
\]

Since $\alpha$ maps to zero in $\mathrm{CH}^*(\pi^{-1}(U_\alpha))$ and the left vertical arrow is an isomorphism by the induction hypothesis, we see that $\alpha \in \mathrm{CH}^*(X)$ has a preimage under $\phi$. By a diagram chase, this element lies in $\ker \psi$. This shows $\ker \psi \to \ker \rho$ is surjective. Thus $\phi$ is surjective by the five lemma.

It remains to show $\ker \phi = 0$. Let $\tilde{\eta} \in \mathrm{CH}^d(F)$ be the generator corresponding to $1 \in \mathbb{Z}$. Relabel the homogeneous elements $\zeta_i$ which lie in $\mathrm{CH}^j(X)$ by double subscripts $\zeta_i^j$. Since $F$ satisfies Poincaré duality, we may choose elements $\vartheta_i^j \in \mathrm{CH}^{d-j}(X)$ whose restrictions to fibres give the dual basis of the restrictions of $\zeta_i^j$.

Now if
\[
  \sum_{i,j} \zeta_i^j \cap \pi^*\alpha_i^j = 0,
\]
then
\[
  \pi_*\bigl(\vartheta_q^p \cap (\sum_{i,j} \zeta_i^j \cap \pi^*\alpha_i^j)\bigr) = 0.
\]
Let $p$ be the maximal index for which $\alpha_q^p \ne 0$ for some $q$, therefore it suffices to check the cases where $j \leq p$. When $ i \ne q$, the duality gives $\pi_*(\vartheta_q^p \cap \zeta_i^p \cap \pi^*\alpha_i^p)=0$. Note in addition that $\pi_*(\vartheta_q^p \cap \zeta_q^p \cap \pi^*\alpha_q^p)=\alpha_q^p$. This follows from the following $\textit{general fact}$: 

--Let $[pt] \in \mathrm{CH}^d(F)$ be the generator corresponding to $1 \in \mathbb{Z} $ under the degree isomorphism. For an element $\gamma$ of $\mathrm{CH}^d(X)$, whose restriction to a particular fibre is $n[pt]$, for some integer $n$, we have $\pi_*(\gamma \cap \pi^*(\alpha)) =n \alpha$, for every $\alpha \in \mathrm{CH}_*(Y)$. \\
Similarly one can show that $\pi_*(\gamma \cap \pi^*(\alpha)) = 0$ for $\gamma \in \mathrm{CH}^s(X)$, with $s< d$. So for $j< p$ we have $\pi_*(\vartheta_q^p \cap \zeta_i^j \cap \pi^*\alpha_i^j) = 0$.

\end{proof}

\begin{remark}
Note that the assumption that the fibre $F$ is pure Tate is crucial even to have K\"unneth formula. E.g. for abelian variety $A$, one can see that the class of Poincar\'e line bundle in $\mathrm{CH}^1(A\times A^\vee)$ is not coming from $\mathrm{CH}^*(A)\otimes \mathrm{CH}^*(A^\vee) \to \mathrm{CH}^*(A\times A^\vee)$. 
\end{remark}

\begin{remark}
    Recall that a variety $F$ is called \emph{cellular} if there exists a filtration by closed subvarieties
    \[
    \emptyset = F_{-1} \subset F_0 \subset \dots \subset F_m = F
    \]
    such that $F_i \setminus F_{i-1}$ is a disjoint union of affine spaces $\coprod_j\mathbb A^{d_i^j}$. Then motive $M_{\mathrm{CH}}(F)$ associated to $F$ admits a decomposition
$$
M_{\mathrm{CH}}(F)\cong \bigoplus_{i,j} \mathbf 1(-d_i^j),
$$
for example see \cite{EKM}[Corollary 66.4]. In particular the motive associated to a cellular variety is pure Tate. 
\end{remark}

\begin{theorem}[Motivic Leray--Hirsch for pure Tate fibre bundles]\label{ThmLetaHirschForChowMotives}
Let \(\pi:X\to Y\) be a smooth proper fibration of irreducible algebraic varieties, locally trivial for Zariski topology, with fibre $F$. Assume further that \(F\) satisfies Poincaré duality, and its associated motive \(M_{\mathrm{CH}}(F)\) is pure Tate. Then there is an isomorphism
\[
\Phi:\; M_{\mathrm{CH}}(Y)\otimes M_{\mathrm{CH}}(F)\ \longrightarrow\ M_{\mathrm{CH}}(X)
\]
in \(\mathrm{CHM}(k)_{\mathbb Q}\), given by explicit correspondences.
\end{theorem}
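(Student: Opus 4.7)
The plan is to reduce the motivic theorem to its Chow-theoretic counterpart, Theorem~\ref{ThmLetaHirschForChow}, by first constructing the candidate correspondence $\Phi$ explicitly from global lifts of a fibrewise basis, and then verifying bijectivity via a Yoneda argument inside $\mathrm{CHM}(k)_{\mathbb{Q}}$.

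Since $M_{\mathrm{CH}}(F)$ is pure Tate and $F$ satisfies Poincar\'e duality, Proposition~\ref{PropCHowDecompositionForPM} gives a decomposition $M_{\mathrm{CH}}(F)\cong\bigoplus_{i=1}^{r}\mathbb L^{n_i}$ together with a distinguished free basis $\eta_i\in\mathrm{CH}^{n_i}(F)_{\mathbb Q}$ and Poincar\'e-dual basis $\omega_i\in\mathrm{CH}^{d-n_i}(F)_{\mathbb Q}$. The next step is to globalize these cycles. On any Zariski trivializing open $U\subset Y$ one has $\pi^{-1}(U)\cong U\times F$, so the classes $\eta_i$ and $\omega_i$ pull back to local cycles on $\pi^{-1}(U)$; differences of two such local lifts over an overlap restrict trivially to every fibre and are therefore supported on proper closed subschemes. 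A Mayer--Vietoris / Zariski descent argument with $\mathbb Q$-coefficients, combined with induction on $\dim Y$ along the localization sequence (in the spirit of the proof of Theorem~\ref{ThmLetaHirschForChow}), then produces global classes $\zeta_i\in\mathrm{CH}^{n_i}(X)_{\mathbb Q}$ and $\vartheta_i\in\mathrm{CH}^{d-n_i}(X)_{\mathbb Q}$ whose restriction to each fibre recovers $\eta_i$ and $\omega_i$, and in particular verifies the hypotheses of Theorem~\ref{ThmLetaHirschForChow} for $\pi$.

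With the $\zeta_i$ in hand, let $\iota:X\hookrightarrow Y\times X$ be the embedding $x\mapsto(\pi(x),x)$ and set $\Phi_i:=\iota_*\zeta_i\in\mathrm{CH}^{\dim Y+n_i}(Y\times X)_{\mathbb Q}$; by the projection formula this equals $[\iota(X)]\cdot\mathrm{pr}_X^{*}\zeta_i$. Each $\Phi_i$ defines a morphism $M_{\mathrm{CH}}(Y)\otimes\mathbb L^{n_i}\to M_{\mathrm{CH}}(X)$ in $\mathrm{CHM}(k)_{\mathbb Q}$, and summing yields the desired $\Phi:M_{\mathrm{CH}}(Y)\otimes M_{\mathrm{CH}}(F)\to M_{\mathrm{CH}}(X)$. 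To prove $\Phi$ is an isomorphism I would invoke Yoneda: since every object of $\mathrm{CHM}(k)_{\mathbb Q}$ is a direct summand of some $M_{\mathrm{CH}}(T)(m)$ with $T$ smooth projective, it suffices to check that for each such pair the induced map
\[
\mathrm{Hom}\bigl(M_{\mathrm{CH}}(T),(M_{\mathrm{CH}}(Y)\otimes M_{\mathrm{CH}}(F))(m)\bigr)\ \longrightarrow\ \mathrm{Hom}\bigl(M_{\mathrm{CH}}(T),M_{\mathrm{CH}}(X)(m)\bigr)
\]
is a bijection. Under the identification $\mathrm{Hom}(M_{\mathrm{CH}}(T),M_{\mathrm{CH}}(Z)(m))\cong\mathrm{CH}^{\dim T+m}(T\times Z)_{\mathbb Q}$, a direct computation with correspondence composition (using the projection formula for $\iota$) identifies this map with the Leray--Hirsch morphism $\phi$ of Theorem~\ref{ThmLetaHirschForChow} applied to the base-changed fibration $\pi\times\mathrm{id}_T:X\times T\to Y\times T$, equipped with the pulled-back global classes $\mathrm{pr}_X^{*}\zeta_i$. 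All hypotheses transfer to this base change, so Theorem~\ref{ThmLetaHirschForChow} concludes the argument.

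The main obstacle I expect is the globalization step, that is, producing global lifts $\zeta_i\in\mathrm{CH}^{*}(X)_{\mathbb Q}$ from their Zariski-local trivializations. Zariski descent for Chow groups is not automatic with $\mathbb Z$-coefficients, which is one of the reasons the theorem is stated with $\mathbb Q$-coefficients; once global $\zeta_i$ are available the remainder of the proof is essentially formal.
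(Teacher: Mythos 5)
Your construction of $\Phi$ coincides with the paper's: your correspondence $\iota_*\zeta_i=[\iota(X)]\cdot\mathrm{pr}_X^*\zeta_i$ is exactly the class $p_X^*(\widetilde\alpha_j)\cap[\Gamma_\pi^t]$ used there (note $\iota(X)=\Gamma_\pi^t$ inside $Y\times X$), and both proofs build the fibrewise basis from Proposition~\ref{PropCHowDecompositionForPM} and globalize it from a Zariski trivialization. Where you diverge is the verification that $\Phi$ is an isomorphism. You argue purely by Yoneda: test against every $M_{\mathrm{CH}}(T)(m)$, identify the induced map on $\mathrm{Hom}$-groups with the Chow-level Leray--Hirsch map for the base-changed family $X\times T\to Y\times T$, and invoke the full isomorphism statement of Theorem~\ref{ThmLetaHirschForChow} (whose hypotheses do transfer under such base change, since $F$ is pure Tate and local triviality is preserved). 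The paper instead constructs an explicit two-sided inverse $\Psi=\oplus_j\Delta_j$ out of the Poincar\'e-dual global classes $\widetilde\beta_j$, and checks $\Psi\circ\Phi=\mathrm{id}$ by the duality relation $\pi_*(\widetilde\alpha_j\cap\widetilde\beta_i)=\delta_{ij}$ and $\Phi\circ\Psi=\mathrm{id}$ using only the \emph{surjectivity} part of Theorem~\ref{ThmLetaHirschForChow} over $Y_T$; the same base-change-to-$T$ device appears in both arguments. Your route is marginally more economical (no dual correspondences needed) but leans on the injectivity half of Theorem~\ref{ThmLetaHirschForChow} for every $Y\times T$, while the paper's yields the inverse $\Psi$ explicitly, which is the more informative output. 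Two small remarks: the existence of global lifts does not require any Mayer--Vietoris or descent argument --- it follows at once from the surjectivity of $\mathrm{CH}^*(X)\to\mathrm{CH}^*(\pi^{-1}(U))$ in the localization sequence (the paper simply takes closures); the genuinely delicate point, which you correctly flag and the paper largely asserts, is that the chosen lifts still restrict to the prescribed basis on fibres over points \emph{outside} the trivializing open.
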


\begin{proof}
   
Since \(M_{\mathrm{CH}}(F)\) is pure Tate, its Chow groups \(\mathrm{CH}^*(F)\) are of finite rank and concentrated in degrees \(n_j\), and we may pick homogeneous generators \(\alpha_1,\dots,\alpha_r\in \mathrm{CH}^*(F)\) with \(\alpha_j\in \mathrm{CH}^{n_j}(F)\) (note that the assignment $j\mapsto n_j$ is not necessarily one to one), see proposition \ref{PropCHowDecompositionForPM}. By Poincar\'e duality, the intersection pairing on \(\mathrm{CH}^*(F)\) is nondegenerate; choose the dual basis 
\(\{\beta_1,\dots,\beta_r\}\subset \mathrm{CH}_*(F)\) so that
\[
\langle \alpha_i,\beta_j\rangle_F \;=\; \delta_{ij}.
\]

\medskip

Take a trivializing open \(U\subset Y\) with \(\pi^{-1}(U)\simeq U\times F\). Pulling back the \(\alpha_j\) along the projection \(U\times F\to F\) gives classes $\alpha_{j,U}$ on \(\pi^{-1}(U)\). Now, take global classes \(\widetilde\alpha_j \in \mathrm{CH}^*(X),\)
lifting \(\alpha_{j,U}\) in \(\mathrm{CH}^*(X)\), namely by taking closure in $X$. Note that for every \(y\in Y\) the restriction
\(\widetilde\alpha_j|_{X_y} \) form a set of generators for \(\mathrm{CH}^*(X_y)\). Likewise define the global classes \(
\widetilde\beta_1,\dots,\widetilde\beta_r \in \mathrm{CH}_*(X).
\)

\medskip
Let \(\Gamma_\pi ^t\) be the transpose of the graph \(\Gamma_\pi\) of \(\pi\). 
Write \(p_Y:Y\times X\to Y\) and \(p_X:Y\times X\to X\) for the projections. For each \(j\) define a correspondence
\[
\Gamma_j \;:=\; p_X^*(\widetilde\alpha_j)\ \cap\ [\Gamma_\pi^t]\ \in\ \mathrm{CH}^{\dim Y + n_j}(Y\times X).
\]
The correspondence \(\Gamma_j\) defines a morphism of motives
\[
\Gamma_j:\ M_{\mathrm{CH}}(Y)(-n_j)\ \longrightarrow\ M_{\mathrm{CH}}(X).
\]
hence, assembling the \(\Gamma_j\) we obtain a single morphism
\[
\Phi\;:\;\bigoplus_{j=1}^r M_{\mathrm{CH}}(Y)(-n_j)\ \longrightarrow\ M_{\mathrm{CH}}(X).
\]
Using the decomposition \(M_{\mathrm{CH}}(F)\simeq\bigoplus_j \mathbf 1(-n_j)\), we regard \(\Phi\) as a map \(M_{\mathrm{CH}}(Y)\otimes M_{\mathrm{CH}}(F)\to M_{\mathrm{CH}}(X)\). Note that in this decomposition $\mathbf 1(-p)$ occurs $r_p=\text{rank}~ \mathrm{CH}_p(F)$ times; see proposition \ref{PropCHowDecompositionForPM}.

\medskip
Let us compute the action of \(\Gamma_j\) on the Chow groups. For any \(a\in \mathrm{CH}^*(Y)\),
\[
\Gamma_{j,*}(a)
\;=\; p_{X,*}\big( p_Y^*(a)\cap \Gamma_j\big)
\;=\; p_{X,*}\big( p_Y^*(a)\cap (p_X^*\widetilde\alpha_j\cap [\Gamma_\pi^t])\big)
\;=\;\pi^*(a)\cap \widetilde\alpha_j,\]
where the last equality follows from the projection formula.
Thus, the total map \(\Phi_*\) on Chow groups is the standard Leray--Hirsch map

\[
\Phi_*:\ \bigoplus_j \mathrm{CH}^{*-n_j}(Y)\ \longrightarrow\ \mathrm{CH}^*(X)
\]
\[
\qquad
~~~~~~~~~~~~~~~~~~~~~(a_j)_j\longmapsto \sum_j \pi^*(a_j)\cap \widetilde\alpha_j.
\]
Now, define the following correspondences
\[
\Delta_j \;:=\; p_X^*(\widetilde\beta_j)\ \cap\ [\Gamma_\pi] \ \in\ \mathrm{CH}^{\dim X - n_j}(X\times Y),
\]
which give maps
\[
\Delta_j: \ M_{\mathrm{CH}}(X)\ \longrightarrow\ M_{\mathrm{CH}}(Y)(-n_j).
\]
Set \(\Psi := \oplus_j \Delta_j : M_{\mathrm{CH}}(X) \to \bigoplus_j M_{\mathrm{CH}}(Y)(-n_j)\). On Chow groups, one checks similarly that \(\Delta_{j,*}\) sends a class \(\gamma\in \mathrm{CH}^*(X)\) to
\[
\Delta_{j,*}(\gamma)\;=\; p_{Y,*}\big(p_X^*(\gamma)\cap (p_X^*\widetilde\beta_j\cap [\Gamma_\pi])\big)
\;=\; \pi_*(\gamma\cap\widetilde\beta_j)\ .
\]

\medskip
Let us now check that \(\Psi\circ\Phi = \mathrm{id}\). We compute the composition \(\Delta_{i}\circ\Gamma_j\) in \[\operatorname{Hom}(M_{\mathrm{CH}}(Y)(-n_j),M_{\mathrm{CH}}(Y)(-n_i))\]. Let \(T\) be an arbitrary test scheme and keep the base-change \(T\) implicit, we write \(\pi\) for \(\pi_T: X_T\to Y_T\), and \(\widetilde\alpha_j,\widetilde\beta_j\) for the pullbacks.\myparagraph{All constructions (\(\Gamma_\pi\), the global classes \(\widetilde\alpha_j,\widetilde\beta_i\), and the correspondences \(\Gamma_j,\Delta_i\)) are compatible with arbitrary base change, so the same formulas for the push--pull action hold on \(Y_T,X_T\).} We therefore compute the effect of \(\Delta_i\circ\Gamma_j\) on an arbitrary class \(a\in \mathrm{CH}^*(Y_T)\).

By the definition of \(\Gamma_{j,*}\) (applied after base change) and the projection formula
\[
\Gamma_{j,*}(a)
\;=\; \pi_T^*(a)\cap \widetilde\alpha_j\quad\in \mathrm{CH}^*(X_T).
\]
Now, apply \(\Delta_{i,*}\), we get
\[
\Delta_{i,*}\big(\Gamma_{j,*}(a)\big)
\;=\; \pi_{T,*}\!\big(\big(\pi_T^*(a)\cap\widetilde\alpha_j\big)\cap\widetilde\beta_i\big)
\]
\[
~~~~~~~~~~~~~~~~~\;=\; \pi_{T,*}\!\big(\pi_T^*(a)\cap(\widetilde\alpha_j\cap\widetilde\beta_i)\big)
\]
\[
~~~~~~~~~\;=\;\delta_{ij}\,a.
\]
The last equality follows from the duality of the basis $\{\alpha_i\}_i$ and $\{\beta_j\}_j$ and projection formula. See also the general fact in the proof of theorem \ref{ThmLetaHirschForChow}. 

Thus we see that \((\Delta_i\circ\Gamma_j)_{i,j}\) yields \(\Psi\circ\Phi=\mathrm{id}_{\oplus_j M_{\mathrm{CH}}(Y)(-n_j)}\), by Yoneda (or by testing on \(T=Y\) and applying the correspondence to the diagonal class \([\Delta_Y]\)). Now, let us check that \(\Phi\circ\Psi = \mathrm{id}\). By the same argument as the surjectivity part of the proof of  theorem\ref{ThmLetaHirschForChow}, applied to $X_T\to Y_T$, it is enough to check the effect of \(\Gamma_j\circ\Delta_i\) on elements of the form \( a= \pi^*(b)\cap\widetilde\alpha_k\) with \(b\) in \(\mathrm{CH}^*(Y_T)\). Then, by definition of $\Delta_{i,*}$ and the projection formula for $\pi$, we have
\[
\begin{aligned}
\Delta_{i,*}(a)
&= \pi_*\big( (\pi^*(b)\cap\widetilde\alpha_k)\cap\widetilde\beta_i \big) \\
&= \pi_*\big( \pi^*(b)\cap(\widetilde\alpha_k\cap\widetilde\beta_i) \big) \\
&= 
   b\cap \pi_*\big(\widetilde\alpha_k\cap\widetilde\beta_i\big)~~~~~~(\text{projection formula})\\
&=\delta_{ki}\, b
\end{aligned}
\]
\myparagraph{Now applying $\Gamma_{j,*}$ we get \(
\Gamma_{j,*}\big(\Delta_{i,*}(a)\big)
   = \Gamma_{j,*}(\,\delta_{ki}\, b\,)
   = \delta_{ki}\,\big(\pi^*(b)\cap\widetilde\alpha_j\big)=\delta_{ki}a.\)
}
Thus \((\Phi\circ\Psi)(a)= \Gamma_{k,*}(\Delta_{k,*}(a))= \Gamma_{k,*}(b)=\pi^*(b)\cap\widetilde\alpha_k= a.\)

\end{proof}

\section{Consequences}

The following generalizes proposition \ref{PropCHowDecompositionForPM} to higher Chow groups:

\begin{corollary}
Let \(\pi : X \to Y\) be as in the above theorem. Then for all integers \(p,q\) we have
\[
\mathrm{CH}^p(X,q)
\;\cong\;
\bigoplus_{i=0}^{\dim F}
\Bigl(\mathrm{CH}^{\,p-i}(Y,q) \otimes \mathrm{CH}_i(F)\Bigr)
\]
\end{corollary}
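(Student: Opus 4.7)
The plan is to upgrade the motivic Leray--Hirsch isomorphism of Theorem~\ref{ThmLetaHirschForChowMotives} into a statement on higher Chow groups by passing to Voevodsky's category and exploiting the representability of Bloch's higher Chow groups as motivic $\mathrm{Hom}$-sets. Concretely, I would apply the symmetric monoidal embedding $\mathrm{CHM}(k)\to \mathrm{DM}_{\mathrm{gm}}(k,\mathbb Z)$ of Remark~\ref{RemEmbeddingCHtoDMgm} to the motivic isomorphism, substitute the explicit Tate decomposition of $M(F)$ coming from Proposition~\ref{PropCHowDecompositionForPM}, and then take $\mathrm{Hom}(-,\mathbb Z(p)[2p-q])$ and use the identification $\mathrm{CH}^p(-,q)\cong \mathrm{Hom}_{\mathrm{DM}_{\mathrm{gm}}}(-,\mathbb Z(p)[2p-q])$.

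In more detail: under the embedding of Remark~\ref{RemEmbeddingCHtoDMgm} the object $\mathbf 1(-n)$ corresponds to $\mathbb Z(n)[2n]$, so Theorem~\ref{ThmLetaHirschForChowMotives} gives $M(X)\cong M(Y)\otimes M(F)$ in $\mathrm{DM}_{\mathrm{gm}}(k,\mathbb Z)$. Since $F$ is pure Tate and satisfies Poincar\'e duality, Proposition~\ref{PropCHowDecompositionForPM} applied to $F$ yields
\[
M(F)\;\cong\;\bigoplus_{i=0}^{\dim F}\mathrm{CH}_i(F)\otimes \mathbb Z(i)[2i],
\]
with each $\mathrm{CH}_i(F)$ free of finite rank. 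Combining these gives
\[
M(X)\;\cong\;\bigoplus_{i=0}^{\dim F}\mathrm{CH}_i(F)\otimes M(Y)(i)[2i].
\]
Applying $\mathrm{Hom}_{\mathrm{DM}_{\mathrm{gm}}(k,\mathbb Z)}\bigl(-,\mathbb Z(p)[2p-q]\bigr)$ (and using that the direct sum is finite and each $\mathrm{CH}_i(F)$ is a finite free $\mathbb Z$-module, so the tensor factor passes through the $\mathrm{Hom}$), followed by the standard cancellation of Tate twists and shifts, gives
\[
\mathrm{Hom}\bigl(M(Y)(i)[2i],\,\mathbb Z(p)[2p-q]\bigr)\;\cong\;\mathrm{Hom}\bigl(M(Y),\,\mathbb Z(p-i)[2(p-i)-q]\bigr)\;\cong\;\mathrm{CH}^{p-i}(Y,q),
\]
and reassembling yields the claimed decomposition.

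The only real bookkeeping issue is the coefficient ring: Theorem~\ref{ThmLetaHirschForChowMotives} is stated in $\mathrm{CHM}(k)_{\mathbb Q}$, whereas the corollary is stated integrally. However, inspection of the proof of Theorem~\ref{ThmLetaHirschForChowMotives} shows that the correspondences $\Gamma_j,\Delta_j$ and the identities $\Psi\circ\Phi=\mathrm{id}$, $\Phi\circ\Psi=\mathrm{id}$ are all constructed and verified over $\mathbb Z$ (no denominators enter, the only inputs being the duality $\langle\alpha_i,\beta_j\rangle_F=\delta_{ij}$ and the projection formula). Hence the motivic isomorphism already lives in $\mathrm{CHM}(k)$, and therefore also in $\mathrm{DM}_{\mathrm{gm}}(k,\mathbb Z)$, so the integral identity for higher Chow groups follows. (Should one prefer to stay strictly within the rational statement of Theorem~\ref{ThmLetaHirschForChowMotives}, the same argument yields the corollary after $-\otimes_{\mathbb Z}\mathbb Q$, which in view of the freeness statements causes no loss.)
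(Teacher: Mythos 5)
Your proposal is correct and follows essentially the same route as the paper: pass to $\mathrm{DM}_{\mathrm{gm}}$, use the Leray--Hirsch isomorphism $M(X)\cong M(Y)\otimes M(F)$ together with the Tate decomposition of $M(F)$ from Proposition~\ref{PropCHowDecompositionForPM}, and compute $\mathrm{Hom}(-,\mathbb Z(p)[2p-q])$ via cancellation of Tate twists (the paper phrases this step through adjunction and $M(F)^{\vee}$, but the computation is identical). Your explicit attention to the $\mathbb Q$ versus $\mathbb Z$ coefficient issue is a worthwhile addition that the paper's own proof leaves implicit.
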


\begin{proof}
Since \(M(F)\) is pure Tate, the motive of \(F\) decomposes as a
finite direct sum of Tate twists.
The theorem provides an isomorphism of motives
\[
M(X) \;\simeq\; M(Y) \otimes M(F),
\]
thus by the motivic description of higher Chow groups we have

\[
\mathrm{CH}^p(X,q)
\;\cong\;
\operatorname{Hom}\bigl(M(Y)\otimes M(F),\mathbf{Z}(p)[2p-q]\bigr).
\]
By adjunction the right hand side is isomorphic to:
\[
\operatorname{Hom}\!\Bigl(M(Y),\underline{\operatorname{Hom}}\bigl(M(F),\mathbf{Z}(p)[2p-q]\bigr)\Bigr) 
\]
\[
\;\cong\; \operatorname{Hom}\!\Bigl(M(Y),M(F)^{\vee}\otimes\mathbf{Z}(p)[2p-q] \Bigr) .
\]
Note that since \(M(F)\) is pure Tate, we have the decomposition \[
M(F) \simeq \bigoplus_{i=0}^{\dim F} \mathrm{CH}^i(F)\otimes\mathbf{Z}(i)[2i],\] see proposition \ref{PropCHowDecompositionForPM}. Therefore
\[
\mathrm{CH}^p(X,q)
\cong
\bigoplus_{i=0}^{\dim F}
\operatorname{Hom}\bigl(M(Y),\mathrm{CH}_i(F) \otimes\mathbf{Z}(p-i)[2(p-i)-q]\bigr)
\cong
\]
\[
\cong
\bigoplus_{i=0}^{\dim F} \left( \mathrm{CH} ^{p-i}(Y,q) \otimes \mathrm{CH}_i(F) \right)
\]
\end{proof}

\paragraph{Projective homogeneous bundles for reductive groups}

   Let \(G\) be a reductive algebraic group and \(P\subset G\) a parabolic subgroup. The homogeneous space \(G/P\) is projective and admits a Bruhat (Schubert) cell decomposition.
   For a principal \(G\)-bundle \(\mathcal{P}\to Y\), form the associated bundle \(X=\mathcal{P}\times^G (G/P)\).
   Since the fibre \(G/P\) is cellular, and hence pure Tate, Schubert cycles give the global classes on \(X\). Thus the 
   Leray--Hirsch theorem gives
  \[
    M(X)\ \cong\ \bigoplus_{w\in W^P} M(Y)\big(-\ell(w)\big),
  \]
  where \(W^P\) are minimal coset representatives of the Weyl group and \(\ell(w)\) is the length of \(w\).

\paragraph{The Case of $G=GL_n$}
Let \(\mathcal{E}\) be a rank-\(r\) vector bundle on \(Y\). Let \(\pi:\mathrm{Gr}_d(\mathcal{E})\to Y\) be the associated Grassmann bundle of rank-\(d\) subbundles.
  The fibre is the Grassmannian \(\mathrm{Gr}(d,n)\), which is cellular. Since the family satisfies the assumptions in the theorem \ref{ThmLetaHirschForChowMotives}, the theorem yields
  \[
    M\big(\mathrm{Gr}_d(\mathcal{E})\big)\ \cong\ \bigoplus_{\lambda} M(Y)(-|\lambda|),
  \]
  where \(\lambda\) runs over partitions indexing the Schubert cells and \(|\lambda|\) denotes the weight. Note that for \(d=1\) this recovers the projective bundle formula  \[
    M\big(\mathbb{P}(\mathcal{E})\big) \;\cong\; \bigoplus_{i=0}^{r-1} M(Y)(-i).
  \]

\begin{remark}[Chow--K\"unneth decompositions]
 The theorem provides a Chow--K\"unneth decomposition for \(X\) whenever one exists for \(Y\).
   If \(M(Y)\) admits a Chow--K\"unneth decomposition, for instance, for abelian varieties, see \cite{D-M}, then so does \(X\) by tensoring with the pure Tate motive of the fibre. 
\end{remark}

\begin{remark}
Another important application  comes from the Motivic Decomposition Theorem (MDT) for semi-small resolutions, established by Migliorini and de Cataldo \cite{C-M}. Namely, when a variety $S$ admits a semi-small resolution of singularities $\Sigma$, which can be realized as an iterated tower of fibrations, satisfying the assumption in the theorem \ref{ThmLetaHirschForChowMotives}, with fibre $F_i$, then one can compute the motive of $\Sigma$ in terms of the fibres, and then use MDT to relate it to the motive $M(S)$. This is useful to study the motives associated to Schubert varieties inside affine Grassmannian $Gr(G)=G(k((z)))/G(k\llbracket z\rrbracket)$, associated with a split reductive group $G$. This in particular implies that the higher Chow groups of a Schubert variety $S(\mu)$ in affine Grassmannian $Gr(G)$ are free and finitely generated. For example, see the situation considered by Ng\^o and Polo \cite{NgoPolo}[Lemma 9.3], where $\mu$ is a sum of cocharacters $\mu_i$ and the resolution is semi-small. 
   
\end{remark}

\begin{remark}
 Motivic decompositions frequently mirror decompositions in derived categories of coherent sheaves.
  Projective-bundle and Grassmann-bundle formulas for motives reflect the semiorthogonal decompositions of the corresponding derived categories; e.g. see \cite{Kap} and \cite{Orl}.
  This correspondence has been exploited in homological projective duality and related areas.

\end{remark}

\bigskip

\bigskip

\begin{minipage}[t]{0.9\linewidth}
\noindent
\small\textbf{Esmail Arasteh Rad}, School of Mathematics, Institute for Research in Fundamental Sciences (IPM), P.O. Box: 19395-5746, Tehran, Iran 
email: earasteh@ipm.ir\\
\\[1mm]

\noindent
\small\textbf{Somayeh Habibi}, School of Mathematics, Institute for Research in Fundamental Sc
iences (IPM), P.O. Box: 19395-5746, Tehran, Iran
 email: \href{shabibi@ipm.ir}{shabibi@ipm.ir}

\end{minipage}

\end{document}